\setlist[enumerate]{topsep=0pt,itemsep=-1ex,partopsep=1ex,parsep=1ex}
\newtheorem{statement}{}[section]
\newtheorem{theorem}[statement]{Theorem}
\newtheorem{lemma}[statement]{Lemma}
\newtheorem{proposition}[statement]{Proposition}
\newtheorem{definition}[statement]{Definition}
\newtheorem{corollary}[statement]{Corollary}
\newtheorem{remark}[statement]{Remark}
\newcommand{\subjclass}[2][1991]{%
  \let\@oldtitle\@title%
  \gdef\@title{\@oldtitle\footnotetext{#1 \textbf{Mathematics subject classification:} #2}}%
}
\newcommand{\keywords}[1]{%
  \let\@@oldtitle\@title%
  \gdef\@title{\@@oldtitle\footnotetext{\textbf{Key words and phrases:} #1}}%
}
\def\n{|\cdot|}
\def\nn{{\|\cdot\|}}
\newcommand{\tnorm}{|\hskip-.07em\|}
\def\nnn{\tnorm\cdot\tnorm}
\newcommand{\ntres}[1]{\tnorm #1 \tnorm} 
\newcommand{\ndos}[1]{\|#1 \|} 
\def\NN{\mathbb N}
\def\IN{\hbox{{\rm I}\kern-.13em{\rm N}}}
\def\RR{\mathbb{R}}
\def\IR{\hbox{{\rm I}\kern-.13em{\rm R}}}
\def\conv{{\rm conv}\,}
\def\sp{\hbox{{\rm span}}}
\def\aa{\alpha}
\def\la{\langle}
\def\ra{\rangle}
\title{Octahedrality and G\^ateaux smoothness}
\subjclass[\textbf{2020}]{46B03, 
46B20, 
46B26. 
}
\keywords{geometry of Banach spaces, renorming, octahedral, G\^ateaux.
}
\author{Ch. Cobollo\footnote{
Christian Cobollo. Instituto Universitario de
Matem\'atica Pura y Aplicada. Universitat Polit\`ecnica de Val\`encia
(Spain).
Email: chcogo@upv.es. Corresponding author.
} \ and
P. H\'ajek\footnote{
Petr H\'ajek. Department of Mathematics, Faculty of Electrical Engineering.
Czech Technical University in Prague (Czech Republic).
Email: hajekpe8@fel.cvut.cz
}
}
\date{July 2024}
\begin{document}
\maketitle
\begin{abstract}
We prove that every Banach
space admitting a G\^ateaux smooth norm and containing a complemented copy of
$\ell_1$ has an equivalent renorming which is simultaneously G\^ateaux smooth and octahedral.  
This is a partial solution to a problem from the early nineties.
\end{abstract}

\section{Introduction} \label{sec:O+G-intro}

Octahedral norms were introduced by Godefroy at \cite{G1}---see also in \cite{GM}---where it was proved that a Banach space $X$
admits an octahedral norm if and only if
it contains a copy of $\ell_1$. Octahedrality can be viewed as a
strong non-differentiability---in the sense of Fr\'echet---
condition. Octahedrality has been extensively
studied in recent years, see e.g. \cite{A,BLZ,D,LL,PZ} and many others,
 in connection with the various
dentability conditions on a Banach space. It also has interesting applications in connection with the
weak sequential completeness of the Banach space, see \cite{DGZ,G1}, and it is also connected with the problem of preserved G\^ateaux
smoothness points---see \cite{G1,WK}.

The coexistence of G\^ateaux smoothness and octahedrality
for a single norm has, therefore, been a known problem
since the early nineties---see \cite{WK,Z} and \cite[Problem 7]{HMZ12}).
Curiously, up to date, the only example of a Banach space $X$ which
admits such a norm is the Hardy space $H_1(D)$, as shown
(using deep results from harmonic analysis) in the monograph
\cite[p. 120]{DGZ}. This space is a separable
subspace of $L_1$ containing a complemented copy of
$\ell_1$. To find such a renorming, even in the basic case $X=\ell_1$, was open.
Our main result in this note is the following.

\begin{theorem}\label{thm:oct}
Let $(X,\nn)$ be a Banach space admitting a G\^ateaux smooth
(equivalent) norm and having a complemented subspace isomorphic to $\ell_1$.
Then $X$ admits a renorming $\ntres{\cdot}$ which is simultaneously
octahedral and G\^ateaux smooth.
\end{theorem}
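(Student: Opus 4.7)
Fix a bounded linear projection $P:X\to X$ with range $Z=PX$ isomorphic to $\ell_1$, so that $X=Y\oplus Z$ with $Y=\ker P$, and let $(e_n)\subset Z$ be a normalized basic sequence equivalent to the canonical $\ell_1$-basis. The plan is to build a new norm $\ntres{\cdot}$ on $X$ whose octahedrality, for each finite-dimensional $F \subset X$ and each $\varepsilon>0$, is witnessed by a tail basis vector $e_n$ with $n$ large compared to $F$, while G\^ateaux smoothness is inherited from the given norm $\|\cdot\|$ through an appropriate interpolation.

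The first step is to produce an equivalent norm $\nu$ on $Z$ which is G\^ateaux smooth and asymptotically $\ell_1$-additive along the basis: for every finite-dimensional $F\subset Z$ and every $\varepsilon>0$ there should exist $N$ with
\[
\nu(z+\lambda e_n)\ge (1-\varepsilon)\bigl(\nu(z)+|\lambda|\nu(e_n)\bigr)
\]
for all $n\ge N$, $z\in F$ and $\lambda\in\RR$. The canonical $\ell_1$-norm has this additivity exactly, but is not G\^ateaux smooth at any $z$ with a zero coordinate. The norm $\nu$ is then obtained from $\|\cdot\|_1$ by adding a G\^ateaux-smoothing strictly convex summand whose contribution on the tail basis decays fast---for instance, a term of the form $\sum_n a_n\phi_n(\cdot)^2$ with $(\phi_n)\subset Z^*=\ell_\infty$ and weights $(a_n)$ chosen so that $\phi_n(e_k)\to 0$ as $k\to\infty$ sufficiently uniformly in $n$. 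The second step is to combine $\|\cdot\|$ and $\nu\circ P$ into a global norm on $X$. A direct Euclidean combination $\|x\|^2+\nu(Px)^2$ is G\^ateaux smooth but loses octahedrality whenever the $Y$-component dominates (the $(0,e_n)$ direction contributes only $O(\lambda^2)$ from a point $(y,0)\in Y$), while a direct sum $\|x\|+\nu(Px)$ preserves octahedrality along the tail but fails G\^ateaux smoothness at points of $Y$. Hence one needs a genuine interpolation between the two, homogeneous of degree one and G\^ateaux smooth off the origin.

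The main obstacle is precisely this interpolation: G\^ateaux smoothness forbids linear ``corners'' of the unit ball, while octahedrality demands them in some direction. The resolution exploits the fact that octahedrality is required only up to $\varepsilon$ and the witnessing direction $u$ may depend on $F$ and $\varepsilon$; by placing $u$ far out in the $\ell_1$-tail and arranging the smoothing contribution to vanish rapidly along that tail, one can design $\ntres{\cdot}$ to look essentially like the $\ell_1$-sum of $\|\cdot\|$ and $\nu\circ P$ in the $(F,e_n)$-plane yet like a smooth norm in every other direction. Once such $\ntres{\cdot}$ is constructed, G\^ateaux smoothness follows by differentiating its squared expression via the chain rule and the smoothness of the ingredients, whereas octahedrality reduces to the tail additivity established in the first step, transported through the interpolation.
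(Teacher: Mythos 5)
Your proposal does not close the two points where all the difficulty of this problem is concentrated. First, the norm $\nu$ on $Z\cong\ell_1$ that is simultaneously G\^ateaux smooth and asymptotically $\ell_1$-additive along the basis is exactly the heart of the matter (even the case $X=\ell_1$ was open), and the construction you offer for it fails: adding a strictly convex smooth summand such as $\sum_n a_n\phi_n(\cdot)^2$ to $\|\cdot\|_1$ (or to its square) does not remove the corners of the $\ell_1$-ball, since the subdifferential of a sum of convex functions is the sum of the subdifferentials, so the resulting function remains non-differentiable at every point where $\|\cdot\|_1$ is. More seriously, no ``soft'' smoothing device of this kind can work, because the standard route to G\^ateaux smoothness (a dual rotund norm) is genuinely unavailable here: an octahedral space contains an asymptotically isometric $\ell_1$-sequence and therefore admits no dual rotund norm, which is precisely why the problem resisted the classical renorming machinery. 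Second, the ``genuine interpolation'' between $\|\cdot\|$ on $Y=\ker P$ and $\nu\circ P$ is never constructed; you correctly identify that the $\ell_2$-combination destroys octahedrality and the $\ell_1$-sum destroys smoothness, but the assertion that one can ``design $\ntres{\cdot}$ to look like the $\ell_1$-sum in the $(F,e_n)$-plane yet smooth in every other direction'' is a restatement of the theorem, not a proof of it.

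For comparison, the actual argument builds the norm jointly on $X_0\oplus\ell_1$ by an inductive geometric construction: the unit ball of $\ntres{\cdot}_{n}$ on $X_n=X_0+\mathrm{span}\{e_1,\dots,e_n\}$ is assembled from homothetic copies of the previous ball, scaled by $1-f_n(|x_n|)$ for carefully chosen convex functions $f_n$ that are flat near $0$ (preserving smoothness at points of $X_{n-1}$), have derivative tending to $\infty$ at $1$ (smoothness at $\pm e_n$), and are uniformly close to $|t|$ (near-$\ell_1$-additivity in the direction $e_n$ against all of $X_{n-1}$, which yields octahedrality of the limit norm). Crucially, the smoothness of each finite stage and of the dense subspace $Y=\bigcup_n X_n$ is the easy part; the delicate part, which your sketch does not touch at all (the remark that smoothness ``follows by differentiating its squared expression via the chain rule'' does not apply to a norm defined as a supremum of finite-stage norms), is proving G\^ateaux differentiability at points $x\in X\setminus Y$. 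That requires quantitative control of the increments $\ntres{x^m+th}-\ntres{x^m}$ uniformly in $m$, via Birkhoff--James orthogonality, preservation of tangent directions under the homothetic construction, and telescoping estimates of the coefficients $C_m$ in the decomposition $h=h_m+C_mx^m$. Without an explicit construction of the norm and these limit estimates, the proposal remains a plan rather than a proof.
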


By \cite[Lemma 28]{HMZ12}, a given element $x\in S_X$ is a very smooth point if
and only if
$x$ is a point of G\^ateaux smoothness in the bidual $X^{**}$.
By \cite{WK}, an octahedral G\^ateaux smooth norm has no points of
preserved smoothness, so our main result implies, in particular, that
under the assumptions of Theorem \ref{thm:oct}
$X$ has a G\^ateaux smooth norm without points of preserved smoothness,
i.e. no very smooth points. Notice that the notion of very smoothness coincides with the one of strong G\^ateaux smoothness---see \cite{HMZ12}.


\vskip5mm

Our proof relies on a new method of construction
based on controlled directional estimates of the norm
on a dense subspace, which
passes to the completion. It is somewhat subtle, and
uses the complementability of $\ell_1$ heavily.
To some extent, this is inevitable, as octahedral norms
cannot have a rotund dual norm, which is the standard
condition in order to obtain a G\^ateaux smooth norm.
Indeed, octahedral spaces contain an asymptotically isometric
$\ell_1$-sequence (\cite{ALNT}), and spaces with such a 
sequence cannot have a dual rotund norm (\cite{NPT}).
The proof would be no simpler if we just assumed that $X=\ell_1$,
but it is not clear if there is a simple formal argument
in our case, using the special case of $\ell_1$, together
with the complementability of $\ell_1$ in $X$.

\medskip 

We are inclined to believe that the complementability condition
in Theorem \ref{thm:oct} is redundant (so, the containment of $\ell_1$
should be sufficient and, of course, also necessary), but
our method of proof does not cover this case.

\medskip

The rest of the text is devoted to the proof of the main Theorem \ref{thm:oct} through the construction of a renorming $\ntres{\cdot}$ being simultaneously G\^ateaux smooth and octahedral. The document is organized as follows: the remaining part of this introductory section will contain preliminaries and notation. Section \ref{sec:O+G-const} consist of the inductive construction of the renorming and the proof of its elementary properties. Lemma \ref{lem:O+G-equiv} shows that it is an equivalent norm to the original one, and Proposition \ref{ocet} contains the argument for octahedrality. The last Section \ref{sec:O+G-smoothness} is completely dedicated to showing the G\^ateaux smoothness of the final norm, which is the most delicate part of the proof. It consists of showing that the G\^ateaux smoothness on the original construction---Proposition \ref{prop:O+G-diff-Xn} and Corollary \ref{cor:O+Gdiff-Y}---is inherited to the whole space $X$. The argument depends on some suitable estimates of the directional derivatives and splitting in two cases, depending if there exists a Birkoff--James orthogonal relation between the point and the direction or not---Subsections \ref{subsec:O+G-tangent} and \ref{subsec:O+G-not-tangent}, respectively.

\subsection{Preliminaries and notation}

We assume that our Banach space $(X,\nn)$
has a G\^ateaux smooth norm $\nn$, and $X\cong X_0\oplus\ell_1$, for some
Banach space
$X_0$. We will use $\{e_i\}_{i=1}^\infty$ to denote the canonical basis of $\ell_1$.
For every $n\in \NN$, consider the linear subspace
\[
X_n:=X_0+\sp\{e_i:\; 1\le i\le n\},\]
and put 
\[\displaystyle Y:=\bigcup_{n\in\mathbb{N}}X_n=\{x\in X: x\in X_n \text{ for some }n\in \NN\}.\]
 Thus, the whole space $X$ is the completion of the subspace $Y$.

It is clear that there exists a unique decomposition of every $x\in X$ as
\[
x=x_0+\sum_{j=1}^\infty x_j e_j,\;\; \text{ where }x_0\in X_0,\;\;
\sum_{j=1}^\infty|x_j|<\infty.
\]
For any $n\in \NN$,  consider the $n$-th canonical projection (or the canonical projection to $X_n$) as the map $P_n:X\to X_n$, 
\[
 P_n(x):=x_0+\sum_{j=1}^n x_j e_j.
\]
 To simplify the notation, we will also denote the $n$-th canonical projection of a given element $x$ by the symbol
\[
x^n:=P_n(x).
\]

The final norm $\nnn$ will be obtained 
through the construction of a sequence of compatible renormings
$\nnn_n$ on the spaces $X_n$. Such a sequence has, of course,
a unique extension to the whole space $X$. It will be easy to check that $\nnn$ is octahedral, as it will have the property on $Y$, and octahedrality
passes to the completion $X$. The construction will also be
G\^ateaux smooth at all points of $Y$---see Section \ref{sec:O+G-smoothness}. The difficult part of the argument is to prove
the G\^ateaux smoothness for every $x\in X\setminus Y$. This is
equivalent to the existence of all directional derivatives
$\frac{\partial\ntres{x}}{\partial h}$, where $h\in Y$.

We refer to \cite{DGZ,yellow2,GMZ2,HJ} for standard results and notation.

\section{The construction of the norm} \label{sec:O+G-const}
This section will contain the inductive construction of the norm $\nnn$, which will be octahedral and G\^ateaux smooth. As said before, it will start through an inductive process of constructing norms in the spaces $X_n$. Roughly speaking, the main idea behind this is to add one more dimension and construct a new norm as the Minkowski functional of a new convex body, defined through homothetic copies of the previous unit ball. Through the assumptions of the function that indicates the homothetic factor depending on the height, we may achieve the new convex body is still smooth, and that on each step, the norm for the new vectors is ``asymptotically'' an $\ell_1$-sum.

We will start by consider sequences of real numbers $\{z_n\}_{n\in \NN}$, $\{l_n\}_{n\in \NN}$, and $\{s_n\}_{n\in \NN}$ such that:

\begin{itemize}
    \item $0<z_n<l_n<s_n<1$;
    \item $z_n$ strictly decreasing;
    \item $l_n$ strictly decreasing, $\{l_n\}_{n \in \NN}\in \ell_1$;
    \item $s_n$ strictly increasing, $s_n \to 1$.
\end{itemize}

Also, take a sequence of continuous convex and  real-valued functions $\{f_n\}_{n\in \NN}$, $f_n\colon [0,1]\to \RR$  with the following properties,

\begin{enumerate}
    \item $f_n\equiv 0$ in $[0,z_n]$;
    \item $f_n$ is smooth and strictly increasing in $[z_n,l_n]$;
    \item $f_n(t):= |t|-\dfrac{z_n+l_n}{2}$;
    \item $f_n$ in
$[z_n,1]$ is strictly
increasing, smooth,
$f(1)=1$ and ${\lim_{t\to 1} f'(t)=\infty}$.
\end{enumerate}

Then, the following holds.
\begin{proposition}
  By the construction above,  for every $t\in [0,1]$
  \[t-\dfrac{z_n+l_n}{2} \leq f_n(t) \leq t.\]

  In particular, $f_n\to |\cdot |$ uniformly.
\end{proposition}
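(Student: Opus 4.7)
The plan is to split the two inequalities and derive each from convexity of $f_n$ together with the piecewise description.

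For the upper bound $f_n(t)\le t$, I would observe that $f_n(0)=0$ (by property~(1)) and $f_n(1)=1$ (by property~(4)), and then invoke convexity: writing $t = t\cdot 1 + (1-t)\cdot 0$ for $t\in[0,1]$ gives
\[
f_n(t)\ \le\ t\,f_n(1) + (1-t)\,f_n(0)\ =\ t.
\]

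For the lower bound $f_n(t)\ge t-\frac{z_n+l_n}{2}$, I would use the middle piece of the definition: on the interval where property~(3) applies (between $l_n$ and $s_n$), $f_n$ coincides with the affine function $g(t):=t-\frac{z_n+l_n}{2}$, so at any interior point $t_0$ of that interval we have $f_n(t_0)=g(t_0)$ and $f_n'(t_0)=1$. Since $f_n$ is convex on $[0,1]$, its graph lies above each of its supporting lines, and the supporting line at $t_0$ is precisely $g$. Hence $f_n(t)\ge g(t)=t-\frac{z_n+l_n}{2}$ on all of $[0,1]$.

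Combining the two bounds gives $|f_n(t)-t|\le \frac{z_n+l_n}{2}$ uniformly in $t\in[0,1]$. Since $\{l_n\}\in\ell_1$ forces $l_n\to 0$, and $0<z_n<l_n$ forces $z_n\to 0$, the right-hand side tends to $0$, yielding uniform convergence $f_n\to|\cdot|$ on $[0,1]$.

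The only mild subtlety is justifying that the ``supporting line'' argument works at points of $[0,1]$ outside $[l_n,s_n]$, in particular near the kinks at $z_n,l_n,s_n$ where $f_n$ may fail to be differentiable; but this is exactly the content of the subgradient characterization of convex functions (every affine minorant at an interior point of differentiability remains a global minorant), so no further work is needed. I do not expect a genuine obstacle here — the statement is essentially a bookkeeping consequence of the piecewise definition and convexity.
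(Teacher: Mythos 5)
Your proof is correct, and is essentially a fleshed-out version of what the paper leaves implicit: the paper's ``proof'' consists of a single sentence declaring the inequalities clear from the construction and pointing to the figure. Your convexity arguments (the chord inequality $f_n(t)\le(1-t)f_n(0)+tf_n(1)=t$ for the upper bound, and the supporting-line / subgradient inequality at an interior point of the affine middle piece for the lower bound) make that hand-wave rigorous, and the concluding observation that $z_n,l_n\to 0$ gives uniform convergence is exactly right.
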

\begin{proof}
    Is is clear from the construction. See also Figure \ref{fig:n-func}.
\end{proof}

\begin{figure}[h]
\begin{center}
\includegraphics[width=4in]{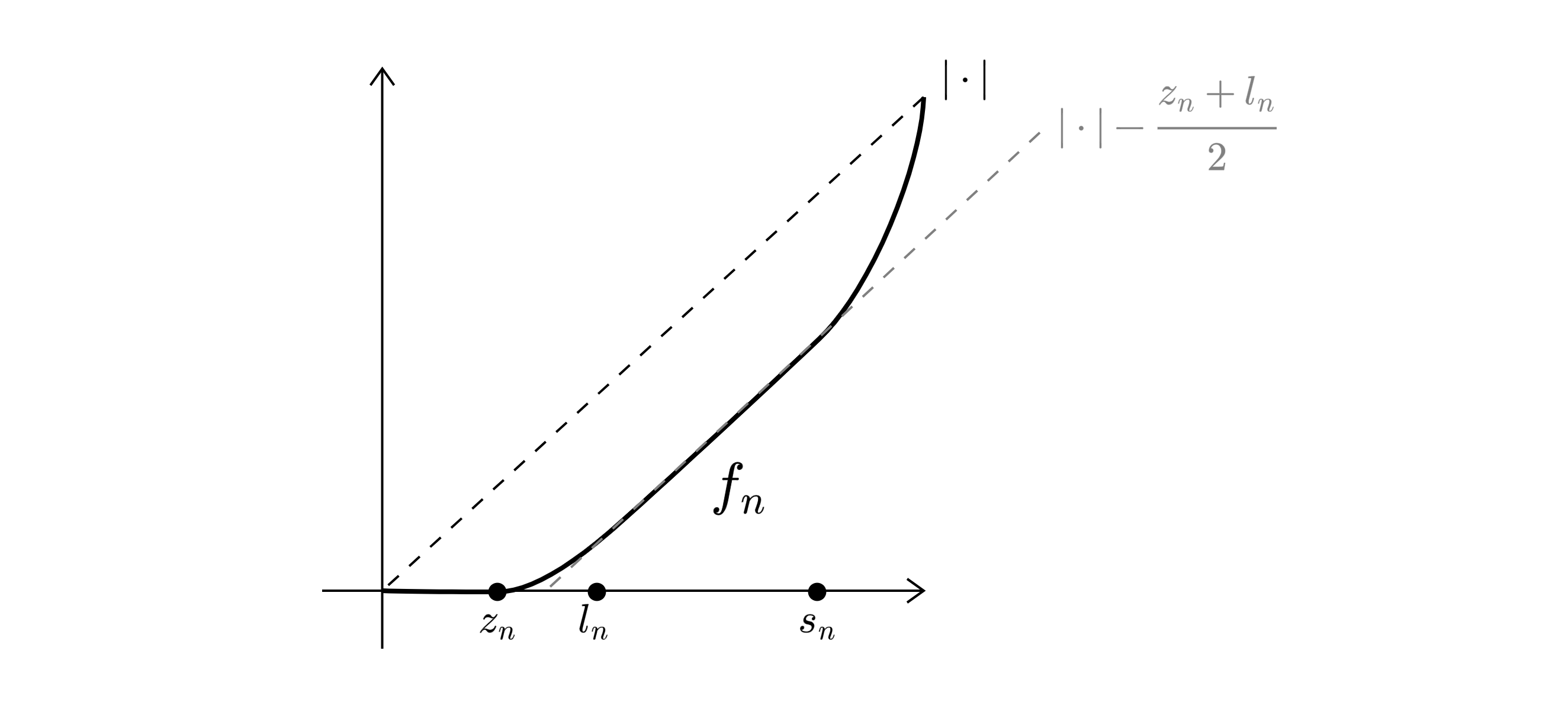}
\caption{Shape of the function $f_n$}
\label{fig:n-func}
\end{center}
\end{figure}

From the result above we also have that for every $t\in [0,1],$

 \[1-t+\dfrac{z_n+l_n}{2} \geq 1- f_n(t) \geq 1-t.\]

Now, we are ready to start with the construction of the norms. For $n=0$ just define $\nnn_0:=\|\cdot\|$ as the restriction of the
G\^ateaux smooth
norm from $X$ to $X_0$.
For $n\geq 1$, we will define a (equivalent) norm $\nnn_n$ in $X_n$ by
the Minkowski functional of the set
\begin{equation}\label{eq:n-ind}
B_n:=\{x\in X_n : \ntres{P_{n-1}x}_{n-1}
\leq 1-f_n(|x_n|), x_n\in [-1,1]\}.
\end{equation}

Thus, $\nnn_n:= \mu_{B_n} $, and so $B_{X_n}=B_n$ and 
\begin{equation}\label{eq:gat-oct-sphere}
    S_{X_n}=\{x\in X_n: \ntres{P_{n-1}x}_{n-1}=1-f_n(|x_n|), x_n\in[-1,1] \} .
\end{equation}

\begin{figure}[h]
\begin{center}
\includegraphics[width=4in]{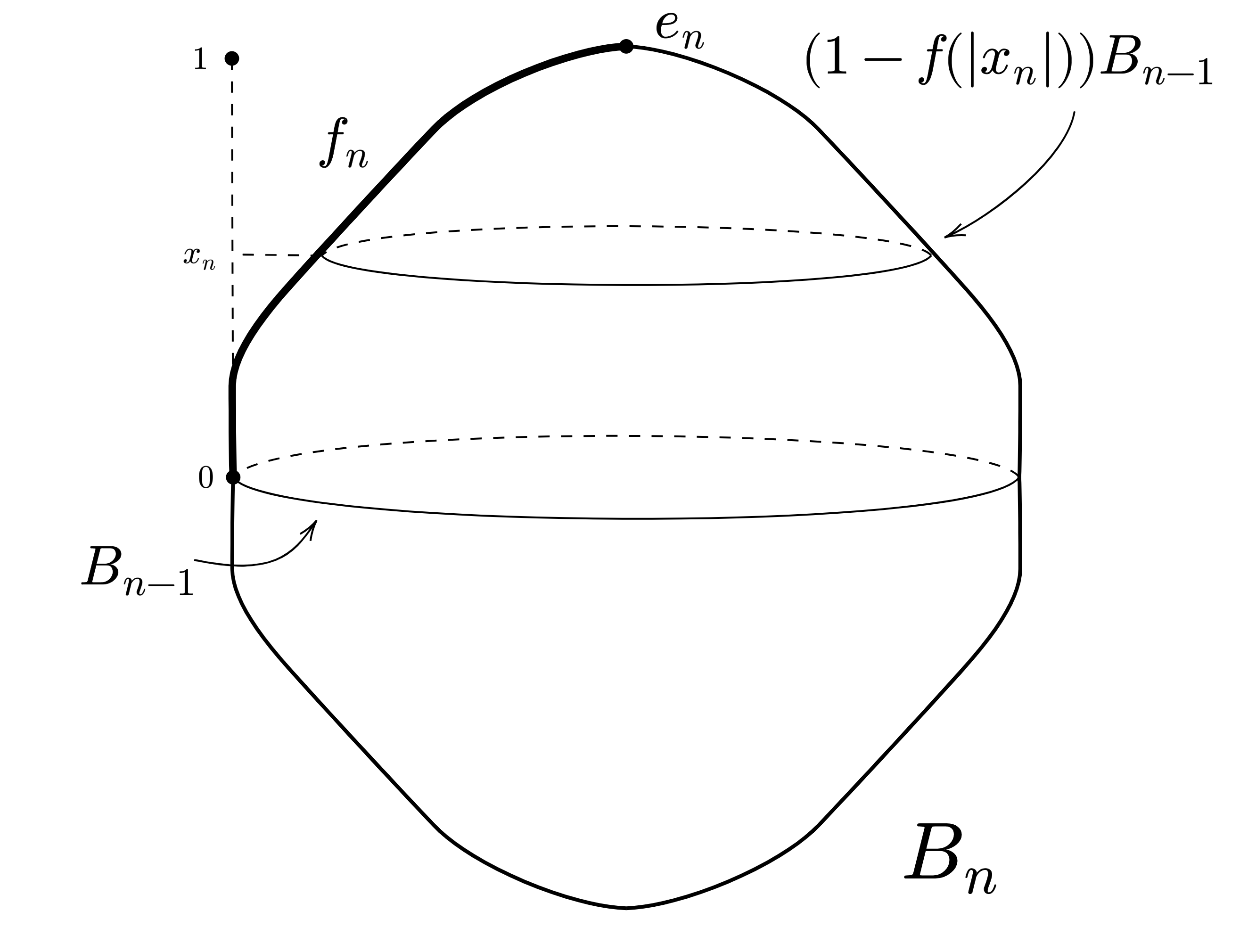}
\caption{Shape of the unit ball $B_n$. The slice of $B_n$ at height $x_n$ is a homotetic copy of $B_{n-1}$ with scalar factor $1-f_n(x_n)$.}
\label{fig:n-ball}
\end{center}
\end{figure}

\begin{lemma}\label{lem:nnn-homotecy}
    Let $x\in Y$. Then, 
    \[\ntres{P_{n-1}x}_{n-1}=(1-f_n(\frac{|x_n|}{\ntres{P_n x}_n})) \ntres{P_n x}_n.\]
    
    In particular, $\ntres{P_{n-1}x}_{n-1}\leq \ntres{P_{n}x}_{n} $.
\end{lemma}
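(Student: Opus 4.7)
The statement is a direct consequence of the homothetic structure built into the definition of $B_n$ in \eqref{eq:n-ind}, combined with the characterization of $S_{X_n}$ in \eqref{eq:gat-oct-sphere}. The plan is to normalize $P_n x$ to the unit sphere and read off the relation between its $(n-1)$-coordinate block and its last coordinate.

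First I would dispose of the trivial case $P_n x = 0$: here $x_n = 0$ and $P_{n-1}x = 0$, so both sides equal zero (and the ratio $|x_n|/\ntres{P_n x}_n$ can be declared $0$ by convention, making $f_n$ evaluate to $0$). Assume now $\lambda := \ntres{P_n x}_n > 0$ and set $y := P_n x / \lambda$, so that $y \in S_{X_n}$. Since $P_{n-1}$ is linear and $P_{n-1}\circ P_n = P_{n-1}$ (the projections are nested), one has $P_{n-1} y = P_{n-1}x / \lambda$, and the $n$-th coordinate of $y$ is $x_n/\lambda$. Applying \eqref{eq:gat-oct-sphere} to $y$ gives
\[
\frac{\ntres{P_{n-1}x}_{n-1}}{\lambda} \;=\; \ntres{P_{n-1}y}_{n-1} \;=\; 1 - f_n\!\left(\frac{|x_n|}{\lambda}\right),
\]
and multiplying through by $\lambda$ yields the stated identity. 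Implicit in the application of \eqref{eq:gat-oct-sphere} is the fact that $|x_n|/\lambda \in [-1,1]$; this follows because every point of $B_n$, hence every point of $S_{X_n}$, has its last coordinate in $[-1,1]$ by the definition of $B_n$.

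For the ``in particular'' clause, I would simply note that $f_n \geq 0$ on $[0,1]$ (by property (i) and the fact that $f_n$ is nondecreasing from $0$ afterwards), so $1 - f_n(|x_n|/\lambda) \leq 1$, whence $\ntres{P_{n-1}x}_{n-1} \leq \lambda = \ntres{P_n x}_n$.

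I do not anticipate any serious obstacle here: the argument is essentially a tautology unpacking the homothetic definition of $B_n$ pictured in Figure \ref{fig:n-ball}. The only thing worth double-checking is the compatibility of the projections, namely $P_{n-1}\circ P_n = P_{n-1}$, which is immediate from the coordinate formula defining $P_k$.
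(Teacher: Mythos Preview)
Your proof is correct and follows essentially the same approach as the paper: normalize $P_n x$ to $S_{X_n}$ and read off the identity from the sphere description \eqref{eq:gat-oct-sphere}. Your version is in fact slightly more careful (you make explicit the use of $P_{n-1}\circ P_n=P_{n-1}$, the fact that $|x_n|/\lambda\in[-1,1]$, and that $f_n\ge0$), but there is no substantive difference.
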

\begin{proof}
   We can assume without loss of generality that $\ntres{P_n x}_n>0$---otherwise, the result is trivial. Then, $P_n \Big( \dfrac{x}{\ntres{P_n x}_n} \Big) \in S_{X_n}$, or equivalently,

   \[\ntres{P_{n-1}\Big(P_n \Big( \dfrac{x}{\ntres{P_n x}_n} \Big) \Big)}_{n-1} = 1-f_n(\dfrac{|x_n|}{\ntres{P_n x}_n}),\]

   and from here we deduce

   \[\ntres{P_{n-1} x}_{n-1}= (1-f_n(\dfrac{|x_n|}{\ntres{P_n x}_n})) \ntres{P_n x}_n \leq \ntres{P_n x}_n. \]
\end{proof}

For the previous description of the norm there are some easy consequences that follow naturally. We will state them for further reference.

\begin{corollary}\label{cor:nnn-homotecy}

If $x\in S_{(X_{n},\ntres{\cdot}_{n})}$ and $|t|<z_{n+1}$, then 
\[
\ntres{x+t e_{n+1}}_{n+1}=\ntres{x}_{n}.
\]

In particular,
 \[
X_{n}\cap B_{(X_{n+1},\ntres{\cdot}_{n+1})}= B_{(X_{n},\ntres{\cdot}_{n})}.
\]
\end{corollary}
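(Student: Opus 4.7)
The plan is to read off the first equality directly from the defining condition of $B_{n+1}$, and then deduce the second equality by applying the first with $t=0$ together with homogeneity.

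For the first statement, set $y := x + t e_{n+1}$, so that $P_n y = x$ and $y_{n+1} = t$. For the upper bound $\ntres{y}_{n+1} \le 1$: since $|t| < z_{n+1}$, property (1) of the functions $f_n$ gives $f_{n+1}(|t|) = 0$, so the defining inequality of $B_{n+1}$ in \eqref{eq:n-ind} becomes $\ntres{P_n y}_n = \ntres{x}_n = 1 = 1 - f_{n+1}(|t|)$, and $|y_{n+1}|=|t|<1$, whence $y\in B_{n+1}$. For the lower bound, suppose for contradiction that $\lambda := \ntres{y}_{n+1} < 1$. Then $y/\lambda \in B_{n+1}$, so
\[
\frac{1}{\lambda}\;=\;\ntres{x}_n/\lambda\;=\;\ntres{P_n(y/\lambda)}_n\;\le\;1 - f_{n+1}(|t|/\lambda)\;\le\;1,
\]
contradicting $\lambda<1$. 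Hence $\ntres{y}_{n+1}=1=\ntres{x}_n$, as required.

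For the second statement, the inclusion is a one-line consequence of the first. Any $x\in X_n$ with $x\neq 0$ can be written as $x = \ntres{x}_n \cdot x'$ where $x' \in S_{(X_n,\ntres{\cdot}_n)}$. Applying the already-proven first part with $t=0 < z_{n+1}$ (and noting that $x' + 0\cdot e_{n+1} = x'$) yields $\ntres{x'}_{n+1} = \ntres{x'}_n = 1$. By positive homogeneity of both norms we conclude $\ntres{x}_{n+1} = \ntres{x}_n$ for every $x\in X_n$, and the equality $X_n \cap B_{(X_{n+1},\ntres{\cdot}_{n+1})} = B_{(X_n,\ntres{\cdot}_n)}$ follows at once.

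No step here is delicate: everything is extracted mechanically from the Minkowski-functional definition of $\nnn_{n+1}$, combined with the fact that $f_{n+1}$ vanishes on the interval $[0, z_{n+1}]$. The only point that needs attention is making sure the argument is set up so that the homogeneity reduction to $S_{X_n}$ is valid, which is why I separate the trivial case $x=0$.
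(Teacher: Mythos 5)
Your proof is correct and follows essentially the same route as the paper, which simply declares the formulae to be immediate from the construction of $\ntres{\cdot}_{n+1}$ and the fact that $f_{n+1}\equiv 0$ on $[0,z_{n+1}]$; you have merely written out the routine verification (membership of $x+te_{n+1}$ in $B_{n+1}$ for the upper bound, a scaling contradiction for the lower bound, and homogeneity for the ball equality). The only implicit point is that $\ntres{y}_{n+1}=\lambda$ puts $y/\lambda$ in $B_{n+1}$, which is fine since $B_{n+1}$ is closed (or one can run the same contradiction with any $\mu\in(\lambda,1)$).
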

\begin{proof}
    The formulae follow readily from the
properties of $f_n$ and the construction of $\ntres{\cdot}_n$.
\end{proof}

Now, we are ready to define the final renorming $\nnn$, through the supremum of the already constructed $\nnn_n$, that is, for any $x\in X$
\[{\ntres{x}:= \sup_{n\in \NN}\{\ntres{P_nx }_n\}}.\]

We will prove that it is indeed an equivalent norm though being equivalent to the already equivalent norm $\nnn_0\oplus_1 \nn_1$ in $X$ (the computation of $\ntres{x_0}_0+\ndos{(x_n)_{n=1}^\infty}_1$ for any $x\in X$). For this purpose, we may define some other norms to use as a comparison.

\medskip

Proceeding by induction again, for $X_0$ take $\n_0:= \nnn_0$---the original G\^ateaux norm in $X_0$. For $n\geq 1$, consider $\n_n:= \nnn_{n-1}\oplus_1 \n$. It is straightforward to see that for $n\geq 1$ the unit ball associated with this norm is
\[B_{\n_n}:= \{x\in X_n: \ntres{P_{n-1}x}_{n-1}\leq 1- |x_n|\}= \conv(B_{n-1}, e_n).\]

\begin{remark}
    In the following, we will use the fact that $\prod_{n\in\NN} (1+\frac{z_n+l_n}{2})$ converges. This is due to the fact that $\{\frac{z_n+l_n}{2}\}_{n\in\NN}\in \ell_1$, because in the construction of the functions $f_n$ we took $\{l_n\}_{n\in\NN}\in \ell_1$.
\end{remark}
\begin{lemma} \label{lem:O+G-equiv}
   For the constructed norms, it is satisfied that
    \[\dfrac{1}{1+\frac{z_n+l_n}{2}} \n_n \leq \nnn_n \leq \n_n.\]

    In particular, by considering the norm $\nnn_0\oplus_1 \nn_1$ on $X$, we have that
    \[ \dfrac{1}{\prod_{n\in\NN} (1+\frac{z_n+l_n}{2})} (\nnn_0\oplus_1 \nn_1)\leq \nnn \leq\nnn_0\oplus_1 \nn_1.\]
\end{lemma}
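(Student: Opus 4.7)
The per-level inequality $\frac{1}{1+\frac{z_n+l_n}{2}}\n_n\leq \nnn_n\leq \n_n$ is a direct consequence of the sandwich $t-\frac{z_n+l_n}{2}\leq f_n(t)\leq t$ already established for $t\in[0,1]$. Compare the unit balls: $B_n$ defines $\nnn_n$, while $B_{\n_n}=\{x\in X_n:\ntres{x^{n-1}}_{n-1}\leq 1-|x_n|\}$. Since $1-|x_n|\leq 1-f_n(|x_n|)$, the inclusion $B_{\n_n}\subseteq B_n$ is immediate, yielding $\nnn_n\leq \n_n$. Conversely, if $\ntres{x}_n\leq 1$ then $\ntres{x^{n-1}}_{n-1}\leq 1-f_n(|x_n|)\leq 1-|x_n|+\frac{z_n+l_n}{2}$, so $|x|_n=\ntres{x^{n-1}}_{n-1}+|x_n|\leq 1+\frac{z_n+l_n}{2}$, which rearranges to the lower bound.

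For the global inequality, I would iterate and telescope. Writing $a_n:=\ntres{x^n}_n$ for a fixed $x\in X$, the definition of $\n_n$ gives the identity $|x^n|_n=a_{n-1}+|x_n|$, and the per-level estimates above read
\[
a_n\leq a_{n-1}+|x_n|\leq \Bigl(1+\tfrac{z_n+l_n}{2}\Bigr)\,a_n.
\]
Unfolding the left-hand inequality from $n$ down to $0$ gives, by a trivial induction, $a_n\leq \|x_0\|+\sum_{j=1}^n|x_j|$, and passing to the supremum over $n$ produces $\ntres{x}\leq (\nnn_0\oplus_1\nn_1)(x)$.

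For the lower bound, set $C_n:=\prod_{j=1}^n(1+\tfrac{z_j+l_j}{2})$ (which is nondecreasing in $n$ and $\geq 1$). I claim by induction on $n$ that
\[
\|x_0\|+\sum_{j=1}^n|x_j|\;\leq\; C_n\,a_n.
\]
Indeed, the inductive step is $C_{n-1}a_{n-1}+|x_n|\leq C_{n-1}(a_{n-1}+|x_n|)\leq C_{n-1}(1+\tfrac{z_n+l_n}{2})\,a_n=C_n\,a_n$, using $C_{n-1}\geq 1$ and the right-hand per-level inequality. Since $a_n\leq \ntres{x}$ and $C_n\leq \prod_{k\in\NN}(1+\tfrac{z_k+l_k}{2})<\infty$ (by the Remark, which uses $\{l_n\}\in\ell_1$), letting $n\to\infty$ yields the stated lower bound, valid for every $x\in X$ (including $x\notin Y$).

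The argument is essentially bookkeeping, and the only subtle step is the lower bound at the global level: each telescoping step contributes a multiplicative factor rather than an additive term, which is precisely why the $\ell_1$-summability of $\{l_n\}$ (and thus convergence of the infinite product) is indispensable.
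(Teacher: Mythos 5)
Your proposal is correct and follows essentially the same route as the paper: derive the per-level norm comparison from the sandwich $t-\frac{z_n+l_n}{2}\leq f_n(t)\leq t$ (equivalently, the ball inclusions $B_{\n_n}\subseteq B_n\subseteq(1+\frac{z_n+l_n}{2})B_{\n_n}$), then telescope over $n$ and pass to the supremum, using $\{l_n\}\in\ell_1$ to control the infinite product. Your explicit introduction of the partial products $C_n$ and the remark that $C_{n-1}\geq 1$ lets you absorb the additive $|x_n|$ term is just a slightly more carefully bookkept version of the paper's iterated inequality.
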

\begin{proof}
First, it is clear that
    \[B_{\n_n}\subset B_n \subset  (1+\frac{z_n+l_n}{2}) B_{\n_n}.\]

    Now, on the one hand, from the right-hand side inclussion, we have that for every $n$,
    \begin{align*}
       \ntres{P_n x}_n \leq |P_nx|_n= \ntres{P_{n-1}x}_{n-1}+ |x_n|.
    \end{align*}
    Applying this inequation iteratively, we reach
    \[\ntres{P_n x}_n \leq \ntres{x_0}_0+\sum_{i=1}^n |x_i|.\]

    In particular, by taking supremums at both sides, we reach the first inequality ${\nnn \leq \nnn_0\oplus_1 \nn_1}$.

On the other hand, by the left-hand side inclussion, we have
    \begin{align*}
       \ntres{P_n x}_n \geq \dfrac{1}{1+\frac{z_n+l_n}{2}} \|P_nx\|_n= \dfrac{1}{1+\frac{z_n+l_n}{2}} \Big( \ntres{P_{n-1}x}_{n-1}+ |x_n|  \Big).
    \end{align*}

    Once again, applying this iteratively
     \begin{align*}
        \ntres{P_n x}_n 
 \geq   \Big( \prod_{j=1}^{\infty}  \dfrac{1}{1+\frac{z_j+l_j}{2}}\Big)  \Big(  \ntres{x_0}_0+  \sum_{i=1}^n |x_i|\Big). 
    \end{align*} 

    In particular, by taking supremums on $n\in \NN$, we reach
    \[\nnn \geq  \dfrac{1}{\prod_{n\in\NN} (1+\frac{z_n+l_n}{2})} (\nnn_0\oplus_1 \nn_1).\]
    The proof is over.
\end{proof}

\subsection{Octahedrality of the norm}
Here, we will show the octahedrality of $\nnn$.
Recall that a norm $\nn$ of a Banach space $X$ is said to be \textbf{octahedral} if for every $\varepsilon>0$ and every finite-dimensional subspace $F$ of $X$ there exists $x\in S_X$ such that
\[\|y+\aa x\| \geq (1-\varepsilon)(\|y\|+|\aa|)\]
for every $y\in F$ and $\aa\in \RR$.

 The core of the idea is that, in the set $B_n$, the element $e_n$ is ``close'' to witnessing the octahedrality of the norm for any element in $B_{n-1}$, and the closeness is reduced with the increasing of the $n$.
 
\begin{proposition}\label{ocet}
    Let $\varepsilon>0$. Then, there exists $n_0\in \NN$ such that, for any $n\geq n_0$,
    \[\ntres{P_{n-1} x+ \aa e_n}_n \geq (1-\varepsilon) (\ntres{P_{n-1} x}_n + |\aa|).  \]

    In particular, the norm $\ntres{\cdot}$ is octahedral.
\end{proposition}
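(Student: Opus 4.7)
The plan is to establish the displayed inequality as a direct consequence of Lemma \ref{lem:nnn-homotecy} and the preliminary estimate $1 - f_n(u) \leq 1 - u + (z_n + l_n)/2$, and then to deduce octahedrality via the paper's observation that the property passes from the dense subspace $Y$ to its completion $X$.

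For the inequality itself, I would fix $x$ and $\alpha$ and set $y := P_{n-1}x + \alpha e_n$ and $N := \ntres{y}_n$; if $N = 0$ the claim is trivial, so assume $N > 0$. Since $e_n$ lies in the kernel of $P_{n-1}$, one has $P_{n-1}y = P_{n-1}x$, and Lemma \ref{lem:nnn-homotecy} applied to $y$ inside $X_n$ gives
$$\ntres{P_{n-1}x}_{n-1} = \bigl(1 - f_n(|\alpha|/N)\bigr)\,N.$$
Writing $u := |\alpha|/N \in [0,1]$ (so $|\alpha| = uN$) and using $\ntres{P_{n-1}x}_n = \ntres{P_{n-1}x}_{n-1}$ from Corollary \ref{cor:nnn-homotecy}, the target inequality reduces, after dividing by $N$, to the scalar statement
$$1 \geq (1-\varepsilon)\bigl(1 - f_n(u) + u\bigr).$$
The preliminary bound immediately yields $1 - f_n(u) + u \leq 1 + (z_n+l_n)/2$, so it suffices to choose $n_0$ so that $(z_n+l_n)/2 \leq \varepsilon/(1-\varepsilon)$ for every $n \geq n_0$; this is possible since $z_n, l_n \to 0$.

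For the "in particular" statement, I would first note that the inequality just proved is really an inequality for $\nnn$ itself: iterating Corollary \ref{cor:nnn-homotecy} gives $\nnn_k = \nnn_n$ on $X_n$ for every $k \geq n$, so $\nnn$ coincides with $\nnn_n$ on each $X_n$. In particular $\ntres{e_n} = 1$ (from $f_n(1) = 1$), so $e_n \in S_X$. Given a finite-dimensional $G \subset Y$, $G$ sits inside some $X_{m}$, and for any $n \geq \max(n_0, m+1)$ every $y \in G$ satisfies $y = P_{n-1}y$, so the inequality applies and $e_n$ witnesses octahedrality for $G$ within error $\varepsilon$. Thus $\nnn$ is octahedral on the dense subspace $Y$, and by the remark in the introduction this passes to the completion $X$.

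The substantive step is the first one, which amounts to a short algebraic manipulation once Lemma \ref{lem:nnn-homotecy} is in hand; the octahedrality reduction relies only on the isometric embeddings $(X_n,\nnn_n) \hookrightarrow (X,\nnn)$ and on octahedrality passing to completions. The real difficulty, arguably, was already absorbed into the design of the functions $f_n$ so that the approximation factor $1 + (z_n + l_n)/2$ tends to $1$; no further obstacle arises here.
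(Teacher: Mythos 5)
Your proposal is correct and takes essentially the same approach as the paper: you derive the inequality directly from Lemma~\ref{lem:nnn-homotecy} together with the bound $f_n(t)\ge t-\tfrac{z_n+l_n}{2}$, whereas the paper reaches the identical estimate by citing the ball-inclusion comparison $\ntres{\cdot}_n\ge \tfrac{1}{1+(z_n+l_n)/2}\|\cdot\|_n$ from Lemma~\ref{lem:O+G-equiv} (your condition $\tfrac{z_n+l_n}{2}\le\tfrac{\varepsilon}{1-\varepsilon}$ is algebraically equivalent to the paper's $\tfrac{z_n+l_n}{z_n+l_n+2}\le\varepsilon$). The ``in particular'' step, which the paper does not spell out, is handled soundly by your reduction to finite-dimensional subspaces of $Y$ and the cited fact that octahedrality passes to the completion.
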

\begin{proof}
    Take $n_0\in \NN $ such that $\frac{z_n+l_n}{z_n+l_n+2} \leq \varepsilon$. Then,
    \begin{align*}
        \ntres{P_{n-1}x+\aa e_n}_n &\geq \dfrac{1}{1+\frac{z_n+l_n}{2}} \|P_{n-1}x+\aa e_n\|_n \\
        &= \dfrac{1}{1+\frac{z_n+l_n}{2}} \Big( \ntres{P_{n-1}x}_{n-1}+|\aa| \Big)\\
        &= \dfrac{1}{1+\frac{z_n+l_n}{2}} \Big( \ntres{P_{n-1}x}_{n}+|\aa| \Big),
    \end{align*}
    and this last term is greater or equal than $(1-\varepsilon)(\ntres{P_{n-1}x}_{n}+|\aa|)$ if and only if
    \[1-\dfrac{1}{1+\frac{z_n+l_n}{2}}= \frac{z_n+l_n}{z_n+l_n+2} \leq \varepsilon.\]

    As this is satisfied because of the choice of $n\in\NN$, we conclude the proof.
\end{proof}

\section{G\^ateaux smoothness of the norm}
\label{sec:O+G-smoothness}

This final section is fully devoted to showing the G\^ateaux smoothness of $\nnn$. It will require splitting the argument into several steps. First, the smoothness of the norm $\nnn_n$ in $X_n$. Geometrically, this is due to the properties on the functions $f_n$---see again Figure \ref{fig:n-ball}. The smoothness of $\nnn_n$ in the points that belong to the previous $X_{n-1}\backslash\{0\}$ is due to the assumption $f_n\equiv 0$ in $[0,z_n]$, and the smoothness in $\pm e_n$ is achieved because ${\lim_{t\to 1} f'(t)=\infty}$. 
\begin{proposition} \label{prop:O+G-diff-Xn}
The space $(X_n,\ntres{\cdot}_n)$ is G\^ateaux smooth.
\end{proposition}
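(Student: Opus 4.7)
The proof will be by induction on $n$. The base case $n=0$ is immediate, since $\ntres{\cdot}_0$ is by definition the restriction to $X_0$ of the given G\^ateaux smooth norm $\nn$ on $X$. For the inductive step, I will assume $(X_{n-1},\ntres{\cdot}_{n-1})$ is G\^ateaux smooth, fix $x\in S_{X_n}$ (WLOG $x_n\ge 0$, by symmetry of $B_n$), and verify that the directional derivative of $\ntres{\cdot}_n$ at $x$ in every direction $h\in X_n$ exists and depends linearly on $h$. By \eqref{eq:gat-oct-sphere} we have $\ntres{P_{n-1}x}_{n-1}=1-f_n(x_n)$, and the three qualitatively distinct parts of the graph of $f_n$ (flat on $[0,z_n]$, smooth strictly increasing with bounded slope on $[z_n,1)$, vertical tangent at $1$) dictate splitting into three cases according to $x_n$.

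When $x_n<z_n$, continuity forces $|y_n|/\ntres{y}_n<z_n$ for all $y$ in a neighborhood of $x$; Lemma \ref{lem:nnn-homotecy} then collapses to $\ntres{y}_n=\ntres{P_{n-1}y}_{n-1}$ locally, and G\^ateaux smoothness at $x$ is inherited verbatim from the inductive hypothesis applied at $P_{n-1}x\in S_{X_{n-1}}$. When $z_n\le x_n<1$, $f_n$ is $C^1$ near $x_n$ and $P_{n-1}x\neq 0$, so $\lambda(y):=\ntres{y}_n$ is implicitly determined near $x$ by
$$F(y,\lambda):=\ntres{P_{n-1}y}_{n-1}-\lambda\bigl(1-f_n(|y_n|/\lambda)\bigr)=0.$$
A direct computation gives $\partial_\lambda F(x,1)=f_n(x_n)-x_n f_n'(x_n)-1<0$ (from $f_n(x_n)\le x_n<1$ and $f_n'\ge 0$), while the $y$-derivatives at $(x,1)$ exist and are linear in $h$, thanks to the inductive smoothness of $\ntres{\cdot}_{n-1}$ at $P_{n-1}x$ together with the $C^1$-smoothness of $f_n$ at $x_n$. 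A one-dimensional implicit differentiation along $t\mapsto x+th$ then produces $\lambda'(0)$ as a linear functional of $h$.

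The delicate case is $x=\pm e_n$, where $P_{n-1}x=0$ and the induction gives no grip on the $X_{n-1}$-directions. Taking $x=e_n$, for $h=h'+h_n e_n\in X_n$ and small $t$, set $\lambda(t):=\ntres{e_n+th}_n$ and $u(t):=|1+th_n|/\lambda(t)$; then $\lambda(t)\to 1$ and $u(t)\to 1^-$. Lemma \ref{lem:nnn-homotecy} reads
$$|t|\,\ntres{h'}_{n-1}=\lambda(t)\bigl(1-f_n(u(t))\bigr),$$
and since $f_n(1)=1$ the mean value theorem gives $\xi\in(u,1)$ with $1-f_n(u)=f_n'(\xi)(1-u)$. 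The hypothesis $\lim_{t\to 1}f_n'(t)=\infty$ forces $f_n'(\xi)\to\infty$, so the displayed identity imposes $1-u=o(t)$. Consequently $\lambda(t)=|1+th_n|/u=1+th_n+o(t)$, and the directional derivative at $e_n$ in direction $h$ is the linear functional $h\mapsto h_n$. This case is the main obstacle of the argument: it cannot be reduced to the induction, and it is precisely the vertical-tangent condition $\lim_{t\to 1}f_n'(t)=\infty$ that flattens $B_n$ at the pole $\pm e_n$ along the $X_{n-1}$-directions enough to kill their contribution to the derivative, thereby delivering G\^ateaux smoothness.
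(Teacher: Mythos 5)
Your proof is correct, but it implements the inductive step differently from the paper. The paper's argument is global: it observes via \eqref{eq:gat-oct-sphere} that $\ntres{\cdot}_n$ is the Minkowski functional of the $1$-level set of the convex function $g(x)=\ntres{P_{n-1}x}_{n-1}+f_n(|x_n|)$, which by the inductive hypothesis is G\^ateaux smooth away from the origin and $\pm e_n$, and then invokes the standard correspondence between uniqueness of the tangent hyperplane to this level set and G\^ateaux differentiability of the Minkowski functional; the case $x=\pm e_n$ is disposed of in one line by asserting that the only supporting hyperplane there is the kernel of the $n$-th coefficient functional. You instead work from the homothety identity of Lemma \ref{lem:nnn-homotecy} and split the sphere into three regimes: on $|x_n|<z_n$ the norm locally coincides with $\ntres{P_{n-1}\cdot}_{n-1}$, on $z_n\le |x_n|<1$ you differentiate the defining relation implicitly (your nondegeneracy computation $f_n(x_n)-x_nf_n'(x_n)-1<0$ is right, and since $t\mapsto\ntres{x+th}_n$ is convex, hence has one-sided derivatives, the chain rule in the $\lambda$-variable legitimately upgrades the directional differentiability of $t\mapsto\ntres{P_{n-1}(x+th)}_{n-1}$ into two-sided differentiability of the norm), and at the pole you use the mean value theorem together with $\lim_{t\to1}f_n'(t)=\infty$ to show $1-u(t)=o(t)$ and hence that the derivative is $h\mapsto h_n$. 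What the paper's route buys is brevity and an explicit appeal to the smooth-convex-body machinery; what yours buys is an explicit formula for the derivative in the middle regime and, notably, a quantitative justification of the pole case that the paper merely asserts. Both arguments share the same implicit assumption on the construction, namely that $f_n$ is differentiable at the junction $t=z_n$ (so that $g$, respectively your case split at $x_n=z_n$, is smooth there), so this is not a gap relative to the paper.
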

\begin{proof}
By induction, for $n=0$, we know that the norm $\nnn_0$ on $X_0$ was G\^ateaux smooth by hypothesis.
By equation \eqref{eq:gat-oct-sphere} we know that $\ntres{\cdot}_n$ is the Minkowski functional
of the $1$-level set of the function
\[
g(x):=\ntres{P_{n-1}x}_{n-1}+f_n(|x_n|).
\]
By inductive assumption, $\ntres{\cdot}_{n-1}$ is G\^ateaux smooth (except
the origin). Hence $g(x)$ is also a G\^ateaux smooth convex
function in its domain,
except for the origin and
possibly the point $\pm e_n$. Hence there is a unique
tangent hyperplane to the graph of $g$ at $x$ and this immediately
implies that there is also a unique tangent hyperplane to the Minkowski
functional of $S_{X_n}$. In other words, $\ntres{\cdot}_n$ is
G\^ateaux differentiable at $x$. The remaining case when $x=\pm e_n$
is clear, since the only tangent hyperplane at this point is the
kernel of the $n$-th coefficient functional on $X$, yielding
G\^ateaux smoothness again. 
\end{proof}

\begin{corollary}\label{cor:O+Gdiff-Y}
The space $(Y,\ntres{\cdot})$ is G\^ateaux smooth.
\end{corollary}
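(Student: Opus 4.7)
The plan is to reduce Gâteaux smoothness of $(Y,\ntres{\cdot})$ to that of each $(X_n,\ntres{\cdot}_n)$, which has already been established in Proposition \ref{prop:O+G-diff-Xn}, using the fact that any point of $Y$ together with any direction in $Y$ sits inside a common finite stage $X_m$ of the construction.

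First I would verify the key identity that the restriction of $\ntres{\cdot}$ to $X_n$ agrees with $\ntres{\cdot}_n$. By Lemma \ref{lem:nnn-homotecy} the sequence $k\mapsto \ntres{P_k x}_k$ is nondecreasing in $k$. On the other hand, for $x\in X_n$ and any $k\ge n$, the $(k+1)$-th coordinate of $x$ vanishes and therefore lies in the flat region $[0,z_{k+1}]$ where $f_{k+1}\equiv 0$; Corollary \ref{cor:nnn-homotecy} then gives $\ntres{x}_{k+1}=\ntres{x}_k$, so inductively $\ntres{x}_k=\ntres{x}_n$ for every $k\ge n$. Taking supremum in the definition of $\ntres{\cdot}$, we obtain $\ntres{x}=\ntres{x}_n$.

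Once this identification is in place, given any $x\in Y\setminus\{0\}$ and any direction $h\in Y$, I would choose $m\in\NN$ large enough so that $x,h\in X_m$. Then $x+th\in X_m$ for every $t\in\RR$, and by the previous paragraph
\[
\frac{\ntres{x+th}-\ntres{x}}{t}=\frac{\ntres{x+th}_m-\ntres{x}_m}{t}.
\]
Proposition \ref{prop:O+G-diff-Xn} ensures that the right-hand side converges as $t\to 0$, so the directional derivative of $\ntres{\cdot}$ at $x$ in the direction $h$ exists. Since this works for every $h\in Y$, we conclude the Gâteaux smoothness of $(Y,\ntres{\cdot})$.

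There is no genuine obstacle at this stage: Gâteaux differentiability is a one-variable statement in the direction $h$, and each pair $(x,h)\in Y\times Y$ is already captured by a single finite stage of the construction, where smoothness has been arranged by design. The real work — propagating smoothness from the dense subspace $Y$ to a point $x\in X\setminus Y$, where the direction $h\in Y$ need not live in any common $X_m$ with $x$ — is postponed to the subsequent Subsections \ref{subsec:O+G-tangent} and \ref{subsec:O+G-not-tangent}.
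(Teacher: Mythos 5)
Your proposal is correct and follows essentially the same route as the paper: the paper's proof simply picks $n$ with $x,h\in X_n$ and invokes Proposition \ref{prop:O+G-diff-Xn}. Your additional verification that $\ntres{\cdot}$ restricted to $X_n$ coincides with $\ntres{\cdot}_n$ (via Lemma \ref{lem:nnn-homotecy} and Corollary \ref{cor:nnn-homotecy}) is a detail the paper leaves implicit, and it is a welcome, correct supplement rather than a different argument.
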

\begin{proof}

Given $x,h\in Y$, there exists $n\in NN$ so that both $x,h\in X_n$, so the result follows from the previous one. 
 \end{proof}

In the remaining part of the section, we will
prove that $\ntres{\cdot}$ is G\^ateaux smooth on the whole $X$.
This is the most delicate part of the argument.
Our Banach space $(X,\ntres\cdot)$ is the completion of
the normed space
$(Y,\ntres\cdot)=\cup_n (X_n,\ntres\cdot_n)$. To
prove that the final norm is G\^ateaux smooth, it suffices to
prove that $\ntres\cdot$ has a directional derivative
at any point $0\ne x\in X$ with respect
to a dense set of directions, in our case, for any $h\in Y$. If $x\in Y$ then this follows directly from
our inductive construction, as both $x$ and the direction $h\in Y$
are contained in some $X_n$ and we know that $\ntres\cdot_n$
is G\^ateaux smooth. It remains to deal with the
delicate case $x\in X\setminus Y$.

\medskip

In what follows, we assume without loss of generality that $x=x_0+\sum_{j=1}^\infty x_j e_j$
(where of course  $\sum_n|x_n|<\infty$),
$\ntres{x}=1$. Since $x^n=x_0+\sum_{j=1}^n x_j e_j$,
 $\ntres{x^n}\le1$ for all $n\in\mathbb{N}$.

In order to prove the G\^ateaux smoothness at a fixed point $x$ we will
obtain an estimate of the function (of parameter $t$)
\[
\phi_{x,h}(t): =\ntres{x+th}-\ntres{x}
\]
for every $h\in Y$.
In the end this will lead to the desired conclusion because
\[
\phi'_{x,h}(0)=\frac{\partial \ntres{x}}{\partial h}.
\] 

We start by collecting some simple observations concerning the functions $\phi_{x,h}$. The proof is omitted, as it is immediate.

\begin{lemma}\label{lem:directional-properties}
 For any $\tau>0$,


\begin{align*}
    \phi_{x,\tau h}(t)&=\ntres{x+t\tau h}-\ntres{x}=
\phi_{x,h}(\tau t);\\
\phi_{\tau x,\tau h}(t)&=\ntres{\tau x+t\tau h}-\ntres{\tau x}=
\tau \phi_{x,h}(t);\\
\phi_{\tau x,h}(t)&=\ntres{\tau x+t\tau (\frac1\tau h)}-\ntres{\tau x}=
\tau \phi_{x,\frac1\tau h}(t)=\tau\phi_{x,h}(\frac1\tau t).
\end{align*}
Also, for fixed $x\in X,h\in Y,t\in\mathbb{R}$, we have
\begin{equation*}
\phi_{x,h}(t)=\lim_n\phi_{x^n,h}(t).
\end{equation*}
\end{lemma}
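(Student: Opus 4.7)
The three algebraic identities are pure bookkeeping, so I would simply substitute the definition $\phi_{u,v}(t)=\ntres{u+tv}-\ntres{u}$ and invoke positive homogeneity of $\ntres{\cdot}$. For the first line this gives $\phi_{x,\tau h}(t)=\ntres{x+t(\tau h)}-\ntres{x}=\ntres{x+(\tau t) h}-\ntres{x}=\phi_{x,h}(\tau t)$. For the second line I factor $\tau$ out of each norm: $\phi_{\tau x,\tau h}(t)=\tau\bigl(\ntres{x+th}-\ntres{x}\bigr)=\tau\phi_{x,h}(t)$. For the third, writing $h=\tau\cdot(\tfrac1\tau h)$ lets me apply the second identity to the pair $(x,\tfrac1\tau h)$ rescaled by $\tau$, obtaining $\tau\phi_{x,\frac1\tau h}(t)$; the first identity then rewrites this as $\tau\phi_{x,h}(\tfrac1\tau t)$.

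The substantive clause is the limit $\phi_{x,h}(t)=\lim_n\phi_{x^n,h}(t)$, and I would derive it from continuity of $\ntres{\cdot}$ together with the convergence $x^n\to x$ in $X$. Since $x=x_0+\sum_{j=1}^\infty x_je_j$ with $\sum_j|x_j|<\infty$, the tail $x-x^n=\sum_{j>n}x_je_j$ has $\nnn_0\oplus_1\nn_1$-norm equal to $\sum_{j>n}|x_j|$, which tends to $0$. By Lemma \ref{lem:O+G-equiv} the norm $\ntres{\cdot}$ is equivalent to $\nnn_0\oplus_1\nn_1$, hence $\ntres{x-x^n}\to 0$ as well. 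Continuity of $\ntres{\cdot}$ then delivers $\ntres{x^n+th}\to\ntres{x+th}$ and $\ntres{x^n}\to\ntres{x}$, and subtracting the two convergences gives the claim.

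I do not anticipate any real obstacle here: every identity in the lemma reduces either to positive homogeneity of $\ntres{\cdot}$ or to norm continuity paired with convergence of the canonical projections, which is presumably why the authors omit the proof.
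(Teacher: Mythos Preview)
Your proof is correct and matches the paper's treatment: the authors explicitly omit the proof as ``immediate'', and your verification via positive homogeneity of $\ntres{\cdot}$ together with $x^n\to x$ and norm continuity is exactly the routine check they have in mind.
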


Our strategy is to show that
the sequence of functions $\phi_{x^n,h}$ yields the estimates needed for
G\^ateaux smoothness
at $x$.

\vskip5mm

We will split the argument into two cases through the well-known notion of orthogonality initially introduced by Birkhoff at \cite{Birk35} and studied by James in \cite{Ja47,Ja47-2}.

\begin{definition}
    Given a Banach space $(X,\nn)$, and $x,h\in X\backslash\{0\}$. It is said that $x$ is  \textit{Birkhoff--James orthogonal} to $h$ if for every $t\in \RR$
    \[\|x+th\|\geq \|x\|.\]
\end{definition}

In our context, $x$ is Birkhoff--James orthogonal to $h$ if and only if $\phi_{x,h}(t)\geq 0$ for every $t\in \RR$. Geometrically, the above definition allows us to think of $h$ as a ``tangent direction'' on $x$, meaning that the vector $x+th$ belongs to
a tangent hyperplane of the multiple of the unit
ball $\ndos{x} B_{X}$ at $x$. Notice that if $\nn$ is G\^ateaux at $x$, then $\phi_{x,h}'(0)=0$ for any $h$ tangent direction at $x$. Finally, one of the key properties of the renorming is that the norms are inductively constructed so that the sliced of unit ball $B_{n+1}$ in the direction of $e_{n+1}$ are homothetic copies of the previous step $B_n$---recall Figure \ref{fig:n-ball}. This implies that Birkoff--James orthogonality for a projection is preserved at further steps.

\begin{corollary}\label{cor:preserve-tangent}
    Let $x\in (X,\nnn)$ such that $x^n\neq 0$. If $x^n$ is Birkoff--James orthogonal to a given direction $h\in X_n$, then $x^{n+1}\in X_{n+1}$ is Birkoff--James orthogonal to $h$.
\end{corollary}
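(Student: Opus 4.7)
The plan is to fix $t\in\RR$ and prove the single inequality
\[\ntres{x^{n+1}+th}_{n+1} \geq \ntres{x^{n+1}}_{n+1},\]
which is the desired Birkhoff--James orthogonality at that specific $t$. Since $h\in X_n$ has zero $(n{+}1)$-th coordinate, both $x^{n+1}$ and $x^{n+1}+th$ share the same $(n{+}1)$-th coordinate $x_{n+1}$, and $P_n(x^{n+1}+th)=x^n+th$. The natural move is to apply Lemma \ref{lem:nnn-homotecy} at level $n+1$ to each of these two vectors; the hypothesis $x^n\neq 0$ ensures the relevant $(n{+}1)$-level norms are strictly positive, so the lemma applies and encodes both step-$n$ norms through a single auxiliary function
\[\psi(r):=r\Bigl(1-f_{n+1}\bigl(|x_{n+1}|/r\bigr)\Bigr), \qquad r\geq|x_{n+1}|,\]
giving $\ntres{x^n}_n=\psi(\ntres{x^{n+1}}_{n+1})$ and $\ntres{x^n+th}_n=\psi(\ntres{x^{n+1}+th}_{n+1})$.

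With this encoding, the hypothesis that $x^n$ is Birkhoff--James orthogonal to $h$ reads $\psi(a)\geq\psi(b)$ for $a:=\ntres{x^{n+1}+th}_{n+1}$ and $b:=\ntres{x^{n+1}}_{n+1}$, so the whole problem collapses to showing that $\psi$ is strictly increasing on $[|x_{n+1}|,\infty)$. Setting $c:=|x_{n+1}|$ and rewriting $\psi(r)=r-r f_{n+1}(c/r)$, the key structural fact I would exploit is that $f_{n+1}$ is convex with $f_{n+1}(0)=0$: this forces $s\mapsto f_{n+1}(s)/s$ to be non-decreasing on $(0,1]$, equivalently, $r\mapsto r f_{n+1}(c/r)$ to be non-increasing. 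For $r_1<r_2$ in $[c,\infty)$ this yields
\[\psi(r_2)-\psi(r_1)=(r_2-r_1)-\bigl(r_2 f_{n+1}(c/r_2)-r_1 f_{n+1}(c/r_1)\bigr)\geq r_2-r_1>0,\]
which is strict monotonicity. (The degenerate case $c=0$ makes $\psi$ the identity, and the claim is already part of Corollary \ref{cor:nnn-homotecy}.)

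Combining the two parts, strict monotonicity of $\psi$ turns $\psi(a)\geq\psi(b)$ into $a\geq b$, and since $t$ was arbitrary this is precisely the desired Birkhoff--James orthogonality of $x^{n+1}$ to $h$ at step $n+1$. The only non-routine step is the strict monotonicity of $\psi$; geometrically it simply says that a larger homothetic slice of $B_{X_n}$ reaches further out at any fixed absolute height $c$ (cf.\ Figure \ref{fig:n-ball}), but making it quantitative requires both the convexity of $f_{n+1}$ and the normalization $f_{n+1}(0)=0$, exactly the shape conditions built into the inductive construction.
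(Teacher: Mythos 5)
Your argument is correct, and it relies on the same key ingredient as the paper, namely Lemma~\ref{lem:nnn-homotecy}, but you fill in the crucial step that the paper compresses into a single displayed identity. The paper's one-liner reads as if the homothety factor $1-f_{n+1}(|x_{n+1}|/\ntres{x^{n+1}})$ can be pulled out of $\phi_{x^{n+1},h}(t)$ uniformly, whereas in fact the factor multiplying $\ntres{x^{n+1}+th}$ involves $f_{n+1}(|x_{n+1}|/\ntres{x^{n+1}+th})$, not $f_{n+1}(|x_{n+1}|/\ntres{x^{n+1}})$; so the displayed equality is not literally an identity. The right way to make the implication precise is exactly what you do: encode both level-$n$ norms through $\psi(r)=r\bigl(1-f_{n+1}(c/r)\bigr)$ and show $\psi$ is strictly increasing on $[c,\infty)$, which you obtain cleanly from the convexity of $f_{n+1}$ together with $f_{n+1}(0)=0$ (so $s\mapsto f_{n+1}(s)/s$ is non-decreasing, hence $r\mapsto r f_{n+1}(c/r)$ is non-increasing). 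Then $\psi(a)\ge\psi(b)$ forces $a\ge b$, which is Birkhoff--James orthogonality at level $n+1$. Your treatment of the degenerate case $x_{n+1}=0$ is also correct. In short: same route as the paper, but carried out rigorously, and your monotonicity argument for $\psi$ is precisely the missing justification behind the paper's ``and the result follows.''
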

\begin{proof}
    By Lemma \ref{lem:nnn-homotecy},
    \[(1-f_{n+1}(\frac{|x_{n+1}|}{\ntres{x^{n+1}}}))\phi_{x^{n+1},h}(t)=\phi_{x^{n},h}(t), \]
    and the result follows.
\end{proof}

 \subsection{Case 1: $x^n$ is Birkoff-James orthogonal to $h\in X_n$} \label{subsec:O+G-tangent}
 
 In this part, we will assume that the direction $h$ belongs to $X_n$ for a given $n\in \NN$, and furthermore, $x^n\in X_n$ is Birkoff--James orthogonal to $h$.
 
 As  $x^n$ is a smooth point, $\phi'_{x^n,h}(0)=0$,
or equivalently
\begin{equation}\label{der-0}
\phi_{x^n,h}(t)=o(t).
\end{equation}

Let us now estimate $\phi_{x^{n+1},h}$.




From the construction of the renormings $\nnn_n$---see Lemma \ref{lem:nnn-homotecy} and Corollary \ref{cor:nnn-homotecy}---we have that for every $\lambda>0$ there is some
$\tilde\lambda\le\lambda$ such that
\[
\lambda S_{X_{n+1}}\cap (X_n+x_{n+1}e_{n+1})=
\tilde\lambda S_{X_n}+x_{n+1}e_{n+1}.
\]

i.e. for a fixed value $x_{n+1}$ we have
\begin{equation*}
\text{$\ntres{x^n}=\tilde\lambda \iff$ $\ntres{x^n+x_{n+1}e_{n+1}}=\ntres{x^{n+1}}=\lambda$.}
\end{equation*}

Take $\tilde P:=\frac{\tilde\lambda}{\ntres{x^n+th}}(x^n+th)$, the point on the ray from the origin to the point $x^n+th$ which has norm $\tilde\lambda=\ntres{x^n}$. 


Now, put $P:=\tilde P+x_{n+1}e_{n+1}$.
Then $P_n(P)=\tilde P$ and $\ntres{P}=\lambda=\ntres{x^{n+1}}$.

Denote $R:=\frac{\lambda}{\ntres{x^{n+1}+th}}(x^{n+1}+th)$
the point of intersection
of the ray from zero to $(x^{n+1}+th)$ with $\lambda S_{X_{n+1}}$.
\medskip
We claim that $\ntres{P_n(R)}\ge\tilde\lambda$ as, from a simple geometric argument, we deduce that $R$ must project farther away from the origin than $\tilde P$---see Figure \ref{fig:claim-cone} below. Indeed, we may write 
\[R=\frac{\ntres{x^{n+1}}}{\ntres{x^{n+1}+th}}((\frac{\ntres{x^n+th}}{\ntres{x^n}}-1)\tilde P+P),\]
so $R\in \sp\{P,\tilde P\}$. Notice that $P,R\in \sp\{ P,\tilde P\} \cap \lambda S_{X_{n+1}}$, but $\ntres{\tilde P}\leq \lambda$. Then, for every $z\in \conv(\tilde P,P)$, it is satisfied that $\ntres{z}\leq \lambda$.

\medskip 
Then, the intersection of $\sp\{x^{n+1}+th\}\cap\conv(\tilde P,P) $ is a unique point $S$, that clearly satisfies $P_n(S)=\tilde P$ and $\ntres{S}\leq \lambda$. As both ${R,S \in \sp\{x^{n+1}+th\}}$, but $\ntres{R}\geq \ntres{S}$, we finally deduce that
\[\ntres{P_n(R)}\geq \ntres{P_n(S)}= \ntres{\tilde P}.\]
So, the claim is proved.

\begin{figure}[htb!]
\begin{center}
\includegraphics[width=4in]{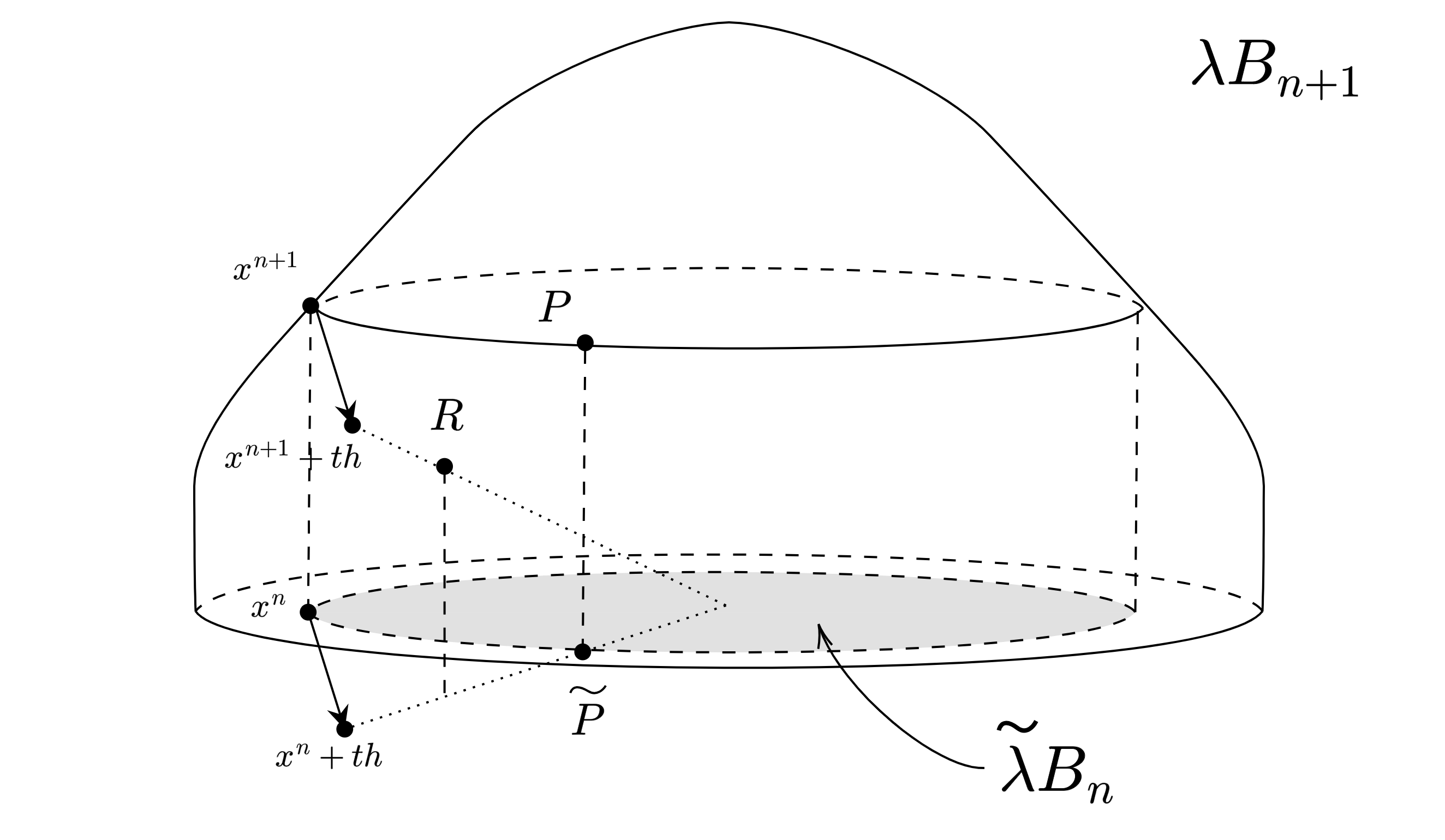}
\caption{Geometric interpretation of the claim.}
\label{fig:claim-cone}
\end{center}
\end{figure}

But then, by the claim

\[\dfrac{\ntres{x^{n+1}}}{\ntres{x^{n+1}+th}}\dfrac{\ntres{x^{n}+th}}{\ntres{x^{n}}} \ntres{\tilde P} = \ntres{P_n(R)}\geq \ntres{\tilde P},  \]

So we just deduced

\begin{equation*}
   \dfrac{\ntres{x^{n+1}}}{\ntres{x^{n}}}\geq 
 \dfrac{\ntres{x^{n+1}+th}}{\ntres{x^{n}+th}}.
\end{equation*}

Using Corollary \ref{cor:preserve-tangent} and this last inequality,
\begin{align*}
   0\leq  \phi_{x^{n+1}, h}(t)&= \ntres{x^{n+1}+th}- \ntres{x^{n+1}}\\
    &\leq \dfrac{\ntres{x^{n+1}}}{\ntres{x^{n}}} \ntres{x^{n}+th} - \ntres{x^{n+1}}\\
    &=\dfrac{\ntres{x^{n+1}}}{\ntres{x^{n}}} \phi_{x^{n}, h}(t).
\end{align*}


Now, we may fix $n$ large enough so that  $h\in X_n$ and
$\ntres{x^n}\ge\frac12\ntres{x}$.
Proceeding inductively as above, we get the estimate for every $m\ge n$:

\begin{equation}\label{teles}
\phi_{x^{m}, h}(t)\le\frac{\ntres{x^{m}}}{\ntres{x^{n}}}\phi_{x^n, h}(t)\le
2\phi_{x^n,h}(t).
\end{equation}

Then, passing to a uniform limit for $m\to\infty$ this clearly implies that

\[
\phi_{x, h}(t)\le2\phi_{x^n,h}(t),
\]
which using \eqref{der-0} means that $\phi_{x,h}(t)=o(t)$ and so 
$\frac{\partial\ntres{x}}{\partial h}=0$.

\subsection{Case 2: no projection $x^m$ is Birkoff--James orthogonal to the direction $h$}\label{subsec:O+G-not-tangent}

Let us pass to the general case. We may assume  $\ntres{x}=\ntres{h}=1$. For sufficiently large $N\in\mathbb{N}$,
 $h\in X_N$, 
and $\ntres{x-x^N}<\frac1{16}$. In what follows, 
 we tacitly assume that $m\geq n>N$, i.e.
without loss of generality, we assume that always $\ntres{x^n}>\frac{15}{16}$ and
$\ntres{x^n-x^m}<\frac18$. We may also consider $|t|<\frac{15}{2\cdot 16}$.

For $m$, let $h=h_m+C_m x^m$ be a (unique) decomposition such that $h_m$ is a tangent direction at $x^m$---see Figure \ref{fig:tg-decomp-m}.

\begin{figure}[htb!]
\begin{center}
\includegraphics[width=4in]{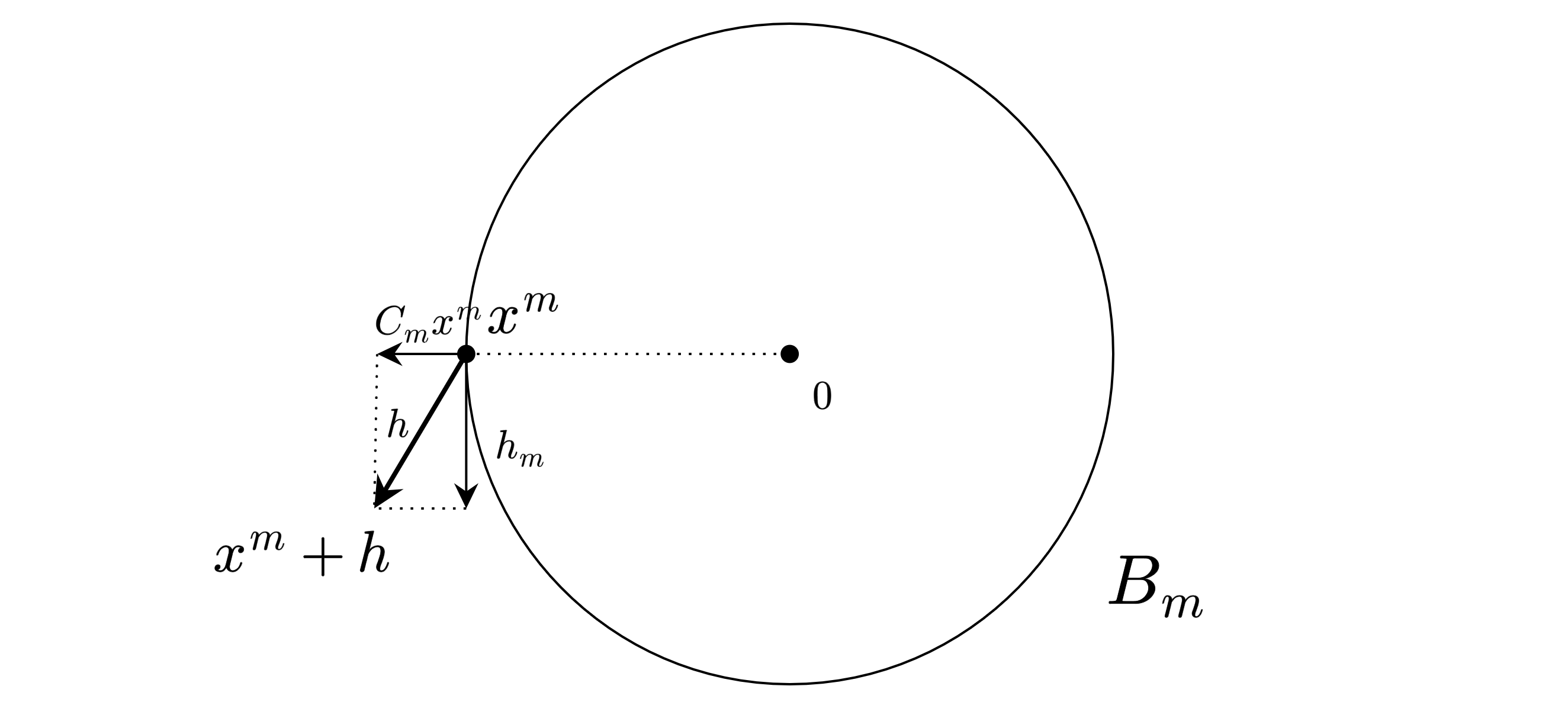}
\caption{Decomposition of a direction $h\in X_m$}
\label{fig:tg-decomp-m}
\end{center}
\end{figure}

Notice that, considering  $g_m\in S_{X_m^*}$, the (unique) norming functional
for  $x^m\in X_m$ (i.e., $\la g_m,x^m \ra=\ntres{x^m}$), it is well known that this functional $g_m$ is the one describing the value of the directional derivatives at $x^m$, i.e., for any direction $y\in X_m\backslash \{0\}$
\[
\frac{\partial\ntres{x^m}}{\partial y}=\la g_m,y \ra.
\]

In particular, we have that $\la g_m, h_m\ra =0$. Geometrically, the unique tangent hyperplane of $\ntres{x^m}B_{X_m}$ at $x^m$ is exactly $x^m+\ker(g_m)$.

As $|\la g_m,h\ra|\le1$ and $\la g_m,x^m\ra= \ntres{x^m}>\frac{15}{16}$, we have

\[
\frac{\partial\ntres{x^m}}{\partial h} =\la g_m,h\ra =C_m \la g_m, x^m \ra =C_m\ntres{x^m},
\]
 from where we deduce that
\begin{equation}\label{c-n est}
|C_m|\le\frac{16}{15}.
\end{equation}

Recall that, by the construction of the norm $\nnn$, tangent directions at one dimension are preserved in further dimensions (see Corollary \ref{cor:nnn-homotecy}), so $h_n\in \ker g_m$, i.e. $\la g_m, h_n\ra =0$, whenever $m\ge n$. Thus, as
\[
g_m(h)=g_m(h_m+C_m x^m)=g_m(h_n+C_n x^n),
\]
we deduce that $C_m g_m(x^m)=C_n g_m(x^n)$ and then we get
\begin{align*}
    0&=C_m g_m(x^m)-C_n g_m(x^n)\\
    &=(C_m-C_n) g_m(x^n)+C_m\sum_{i=n+1}^m x_i g_m(e_i).
\end{align*}

Now, if we combine this equation above with the lower bound
\[
|g_m(x^n)|\geq |g_m(x^m)|-\ntres{x^m-x^n}>\frac{15}{16}-\frac18,
\]

 and using also the upper bound $|C_m|\leq\frac{16}{15}$ from \eqref{c-n est}, we achieve

\[
|C_m-C_n|\le\frac{|C_m|}{|g_m(x^n)|}\sum_{i=n+1}^m|x_i|<2\sum_{i=n+1}^m|x_i|
\]
and, in particular,

\begin{equation}\label{c-m-n}
|C_{n+1}-C_n|<2|x_{n+1}|.
\end{equation}

 With this, we will get the estimation
\begin{equation}\label{eqq-h}
      \ntres{h_{n+1}-h_n} <4|x_{n+1}|.
\end{equation}
 Indeed, using equations \eqref{c-n est} and \eqref{c-m-n}, 
\begin{align*}
    \ntres{h_{n+1}-h_n}&=\ntres{C_{n+1}x^{n+1}-C_n x^n} \\
    &\le \ntres{(C_{n+1}-C_n)x^{n+1}+C_n x^{n+1}-C_n x^n}\\
    &\le\ntres{(C_{n+1}-C_n)x^{n+1}}+
\ntres{C_n x^{n+1}-C_n x^n}\\
&\le 2|x_{n+1}|+ C_n\ntres{x_{n+1}e_{n+1}}\\&<4|x_{n+1}|.
\end{align*}

Now, using Lemma \ref{lem:directional-properties}, 

\begin{align*}\label{h-hn}
    \phi_{x^n,h}(t)&=\ntres{x^n+th_n+tC_n x^n}-\ntres{x^n}\\
    &=\ntres{(1+tC_n)x^n+th_n}-\ntres{(1+tC_n)x^n}+\ntres{(1+tC_n)x^n}-\ntres{x^n}\\
    &=\phi_{(1+tC_n)x^n, h_n}(t)+tC_n\ntres{x^n}\\
    &=(1+tC_n)\phi_{x^n, h_n}(\frac1{1+tC_n}t)+tC_n\ntres{x^n}.
\end{align*}



Notice that the above formula gives an expression of $\phi_{x^n,h}(t)$ that depends on the tangent direction $h^n$. By (re-)writing the formula for the $n+1$-dimension, we would get an expression depending on the next tangent direction $h_{n+1}$. The idea is that, thanks to \eqref{eqq-h}, we can estimate the function $\phi_{x^{n+1},h}$, but still use the previous tangent direction $h_n$. In fact, by writing the formula and adding and subtracting the vector ${v_n:=\frac1{1+tC_{n+1}}th_{n}} $, we get


\begin{align*}
    \phi_{x^{n+1},h}(t)&=(1+tC_{n+1})\phi_{x^{n+1}, h_{n+1}}
(\frac1{1+tC_{n+1}}t)+tC_{n+1}\ntres{x^{n+1}}\\
&=(1+tC_{n+1})(\ntres{x^{n+1}+\frac1{1+tC_{n+1}}th_{n+1}}-\ntres{x^{n+1}})\\
&\ +tC_{n+1}\ntres{x^{n+1}}\\
&=(1+tC_{n+1})(\ntres{x^{n+1}+v_n +\frac1{1+tC_{n+1}}th_{n+1} -v_n}-\ntres{x^{n+1}})\\
&\ +tC_{n+1}\ntres{x^{n+1}}.\\
\end{align*}

We may now use the triangular inequality on the last step of the formula above (where the vectors $v_n$ are introduced). This means that, for a certain error $E_n(t)$, we are able to express
\begin{equation}\label{neci}
    \phi_{x^{n+1},h}(t)  
=\phi_{(1+tC_{n+1})x^{n+1}, h_n}(t)+tC_{n+1}\ntres{x^{n+1}}+E_{n}(t)
\end{equation}

where the error is estimated by
\begin{equation}\label{err-e}
|E_n(t)|\le |t|\ntres{h_{n+1}-h_n}<4|t||x_{n+1}|.
\end{equation}

Notice that comparing the two formulas that we achieved for $\phi_{x^{n+1},h}$, we have that the error is exactly
\begin{equation}\label{eq:err-tang}
   E_n(t)= \phi_{(1+tC_{n+1})x^{n+1}, h_{n+1}}(t) -\phi_{(1+tC_{n+1})x^{n+1}, h_n}(t).
\end{equation}

We can then use \eqref{neci}, getting
\begin{align}
\phi_{x^{n+k},h}(t)-\phi_{x^{n},h}(t)&=
\sum_{j=0}^{k-1}(\phi_{x^{n+j+1},h}(t)-\phi_{x^{n+j},h}(t))\nonumber \\
&=A+B+C\label{eq:ABC},
\end{align}

where we have the three terms
\begin{align*}
    A&:= \sum_{j=0}^{k-1}(\phi_{(1+tC_{n+j+1})x^{n+j+1}, h_{n+j}}(t)-
\phi_{(1+tC_{n+j})x^{n+j}, h_{n+j}}(t)),\\
B&:= \sum_{j=0}^{k-1} t(C_{n+j+1}-C_{n+j})\ntres{x^{n+j+1}}
+\sum_{j=0}^{k-1}tC_{n+j}(\ntres{x^{n+j+1}}-\ntres{x^{n+j}}),\\
C&:=E_{n+k}(t)-E_n(t).
\end{align*}

We can estimate separately the three of them. First, notice that using previous estimations on 
$\nnn$ and using inequalities \eqref{c-n est} and \eqref{c-m-n} to control $|C_{n+j+1}|$ and  $|C_{n+j+1}-C_{n+j}|$ respectively, we get the bound

\[|B|\leq 2|t|\sum_{j=0}^{k-1} |x_{n+j+1}|
+\frac{16}{15}|t|\sum_{j=0}^{k-1}|x_{n+j+1}|.\]

Also, the bound on the error that we had at inequality \eqref{err-e} yields
\[|C|\leq 4|t|| x_{n+k}|+4|t|| x_n|. \]

So, with respect to $B$ and $C$---and for the sake of simplicity---we might take the same bound,
\begin{align}
    |B|\leq 4|t|\sum_{j=0}^{\infty}|x_{n+j}|\label{eq:bound-B}\\
     |C|\leq 4|t|\sum_{j=0}^{\infty}|x_{n+j}|\label{eq:bound-c}
\end{align}
Now, we only need to estimate $A$. Notice that we may split again
\begin{align*}
    |A| \leq |A_1| + |A2|, 
\end{align*}

where 

\begin{align}
    A_1&:= \phi_{(1+tC_{n+k})x^{n+k}, h_{n+k-1}}(t)-
\phi_{(1+tC_{n})x^{n}, h_{n}}(t) \label{eq:defA1}\\
A_2&:= \sum_{j=0}^{k-2}|\phi_{(1+tC_{n+j+1})x^{n+j+1}, h_{n+j}}(t)-
\phi_{(1+tC_{n+j+1})x^{n+j+1}, h_{n+j+1}}(t)| \label{eq:defA2}
\end{align}





For $A_2$, by the error expression at \eqref{eq:err-tang}, we have for $j=0,\dots, k-1$
\begin{align}
    |\phi_{(1+tC_{n+j+1})x^{n+j+1}, h_{n+j}}(t)-
\phi_{(1+tC_{n+j+1})x^{n+j+1}, h_{n+j+1}}(t)| &\leq |E_{n+j}(t)| \nonumber\\
&\leq 4|t||x_{n+j+1}|. \label{ppp33}
\end{align}

So, $A_2$ can be bounded as 
\begin{equation}\label{eq:est-A-2}
   A_2 \leq 4|t|\sum_{j=0}^{k-2}|x_{n+j+1}|= 4|t|\sum_{j=0}^{k-1}|x_{n+j}|.
\end{equation}

For $A_1$, we may express the first of its two terms as


\begin{align} 
  \phi_{(1+tC_{n+k})x^{n+k}, h_{n+k-1}}(t)  =& \Big(\phi_{(1+tC_{n+k})x^{n+k},h_{n+k-1}}(t) - \phi_{(1+tC_{n+k})x^{n+k},h_{n}}(t)\Big) \label{fgh} \\
  &+ \phi_{(1+tC_{n+k})x^{n+k},h_{n}}(t)\nonumber
\end{align}

For the first summand of \eqref{fgh}

\begin{align}
       |\phi_{(1+tC_{n+k})x^{n+k},h_{n+k-1}}(t) - \phi_{(1+tC_{n+k})x^{n+k},h_{n}}(t)|   &\leq |t|\ntres{h_{n+k-1}-h_n} \nonumber\\
          &\leq 4|t|\sum_{j=1}^{k-1}|x_{n+j}|\label{fghki}.
\end{align}


And for the second term of \eqref{fgh}, as $x^n$ is Birkoff--James orthogonal to $h_n$, we can reduce the last term of the above formula to the previous Case 1 in Subsection \ref{subsec:O+G-tangent}, so applying \eqref{teles} and re-writing through Lemma \ref{lem:directional-properties}, we get
\begin{align*}
    |\phi_{(1+tC_{n+k})x^{n+k},h_{n}}(t)|\le
|(1+tC_{n+k})\frac{\ntres{x^{n+k}}}{\ntres{x^{n}}}
\phi_{x^{n}, h_{n}}(\frac1{1+tC_{n+k}}t)|,
\end{align*}

and recalling the estimates of $|C_m|\leq \frac{16}{15}$ in \eqref{c-n est}, and the initial assumptions on the norms $\ntres{x^m}$ and that we took $|t|<\frac{15}{2 \cdot 16}$, we get that for any $k\in \NN$

\begin{align}
 |\phi_{(1+tC_{n+k})x^{n+k},h_{n}}(t)| &\leq \frac{16}{15}(1+ \frac{15}{2 \cdot 16}\frac{16}{15})|\phi_{x^{n}, h_{n}}(\frac1{1+tC_{n+k}}t)|\nonumber \\
 &\leq 2 \phi_{x^{n}, h_{n}}(\frac1{1+tC_{n+k}}t)\label{eq:boundA12},
 \end{align}

where for every $k\in \NN$, $\frac{1}{2}\leq 1+tC_{n+k} \leq \frac{3}{2}$ and 
\begin{equation}\label{eq:indep-k}
    \dfrac{2|t|}{3} \leq \left| \dfrac{t}{1+tC_{n+k}}\right| \leq 2|t|.
\end{equation}
 
So, by \eqref{fgh} and \eqref{fghki}
we get the estimation on $A_1$,
\begin{align}
    |A_1| &=  |\phi_{(1+tC_{n+k})x^{n+k}, h_{n+k-1}}(t)-
\phi_{(1+tC_{n})x^{n}, h_{n}}(t) | \nonumber \\
    &\leq |\phi_{(1+tC_{n+k})x^{n+k}, h_{n+k-1}}(t)|+
|\phi_{(1+tC_{n})x^{n}, h_{n}}(t)|\nonumber \\
&\leq 4|t|\sum_{j=1}^{k-1}|x_{n+j}|+  |\phi_{(1+tC_{n+k})x^{n+k}, h_{n}}(t)|  + |\phi_{(1+tC_{n})x^{n}, h_{n}}(t)|.\label{eq:est-A-1}
\end{align}

Combining the bounds of $A_1$ and $A_2$ (adding equations \eqref{eq:est-A-1} and \eqref{eq:est-A-2} respectively), we get the remaining estimation of $A$,
\begin{equation}\label{eq:est-A}
    |A|\leq    8|t|\sum_{j=1}^{k-1}|x_{n+j}|+   |\phi_{(1+tC_{n+k})x^{n+k}, h_{n}}(t)|  + |\phi_{(1+tC_{n})x^{n}, h_{n}}(t)|
\end{equation}

Combining together the bounds of $A$, $B$ and $C$ (equations \eqref{eq:est-A},  \eqref{eq:bound-B} and \eqref{eq:bound-c} respectively) and returning to \eqref{eq:ABC}, we finally achieve
\begin{align}
    |\phi_{x^{n+k},h}(t)-\phi_{x^{n},h}(t)| &\le A+B+C \nonumber\\
  &\leq   (8+4+4) |t|\sum_{j=0}^{\infty}|x_{n+j}| \nonumber \\ 
  &\ +  |\phi_{(1+tC_{n+k})x^{n+k}, h_{n}}(t)|  + |\phi_{(1+tC_{n})x^{n}, h_{n}}(t)|
\end{align}
\label{konec}

And finally applying \eqref{eq:boundA12} to bound the last two terms in the equation above, we reach
\begin{align*}
        |\phi_{x^{n+k},h}(t)-\phi_{x^{n},h}(t)| & \leq  16|t|\sum_{j=0}^{\infty}|x_{n+j}| +2 \phi_{x^{n}, h_{n}}(\frac1{1+tC_{n+k}}t) \\
        &\ +2 \phi_{x^{n}, h_{n}}(\frac1{1+tC_{n}}t)
\end{align*}
And this last inequality implies that $\phi_{x,h}(t)$ is differentiable
at $t=0$---with derivative equal to $\lim_{n\to\infty}C_n$. 
Indeed, given any $\varepsilon>0$ we may pick $n$ large enough
 so that
$\sum_{j=0}^{\infty}|x_{n+j}|<\varepsilon$. We know that
$\phi_{x^n,h}(t)$ and $\phi_{x^n,h_n}(t)$ are differentiable at 
$t=0$ by the previous case---with values being equal to $C_n$ and $0$,
respectively. So,

\begin{equation}\label{konec-2}
|\phi_{x^{n+k},h}(t)-\phi_{x^{n},h}(t)|\le
16\varepsilon |t|+o(t)
\end{equation}
where the $o(t)$ estimate is independent of $k$ because of \eqref{eq:indep-k}. This finishes the argument and the proof of the main result.

\section*{Acknowledgements}
The present work was done while the first author was on a PhD stay at the Department of Mathematics of the Faculty of Electrical Engineering at the Czech Technical University in Prague, and he is grateful for all the support and kindness received there. The first author was supported by  Generalitat Valenciana (through Project PROMETEU/2021/070 and the predoctoral fellowship CIACIF/2021/378) and by MICIU/AEI/
10.13039/501100011033 and by ERDF/EU (through Projects PID2019-105011GB-I00, PID2021-122126NB-C33 and PID2022-139449NB-I00). The second author was supported by GA23-04776S and SGS24/052/OHK3/1T/13 of CTU in Prague.

\end{document}